\theoremstyle{plain}
\newtheorem{thm}{Theorem}[]
\newtheorem{lem}[thm]{Lemma}
\theoremstyle{definition}
\newtheorem{defn}[thm]{Definition}
\newtheorem*{rmk}{Remark}
\renewcommand{\P}{\mathbb{P}}
\newcommand{\Q}{\mathbb{Q}}
\newcommand{\R}{\mathbb{R}}
\newcommand{\Z}{\mathbb{Z}}
\newcommand{\E}{\mathbb{E}}
\newcommand{\Pb}{\mathbb{P}}
\newcommand{\Rb}{\mathbb{R}}
\newcommand{\Fg}{\mathcal{F}}
\newcommand{\F}{\mathcal{F}}
\newcommand{\Gg}{\mathcal{G}}
\newcommand{\Ft}{\tilde{\mathcal{F}}}
\newcommand{\Gt}{\tilde{\mathcal{G}}}
\newcommand{\hs}{\hspace{2mm}}
\newcommand{\hsl}{\hspace{1mm}}
\newcommand{\ind}{\mathbbm{1}}
\newcommand{\Tt}{\tilde{\mathcal{T}}}
\DeclareMathOperator{\skel}{skel}
\title{The many-to-few lemma and multiple spines}
\author{
 Simon C.~Harris\footnote{Department of Mathematical Sciences, University of Bath, Bath BA2 7AY, UK. \texttt {S.C.Harris@bath.ac.uk}}\hs\hsl and Matthew I.~Roberts\footnote{Department of Mathematical Sciences, University of Bath, Bath BA2 7AY, UK. \texttt {mattiroberts@gmail.com}}
}
\begin{document}
\maketitle

\begin{abstract}
We develop a simple and intuitive identity for calculating expectations of weighted $k$-fold sums over particles in branching processes, generalising the well-known many-to-one lemma.
\end{abstract}

\section{Introduction}
Consider the following simple branching random walk on $\Z$. We begin with one particle at 0, which has two children, whose positions are independent copies of some random variable $X$. Each of these two new particles has two children of its own, whose positions relative to their parent are independent copies of $X$, and so on. If the initial particle is the $0$th generation, and its children are the first generation, then in the $n$th generation we have $2^n$ particles. This is a very basic stochastic model and a classical question asks for the position $M_n$ of the maximal particle in the $n$th generation when $n$ is large. If we let $Y_n(x)$ be the number of particles in generation $n$ whose position is at least $x$, then we anticipate that
\[M_n \approx \sup\{x : \E[Y_n(x)]\geq 1\}.\]
Indeed, for an upper bound, $\P(M_n \geq x) = \P(Y_n(x)\geq 1) \leq \E[Y_n(x)]$. We would therefore like to calculate $\E[Y_n(x)]$, and of course by linearity of expectation we have
\begin{equation}\label{vsimple}
\E[Y_x] = 2^n \P\left(S_n\geq x\right)
\end{equation}
where $S_i,i\geq0$ is a random walk with step distribution $X$. Thus a question about the $2^n$ particles in the $n$th generation becomes one about a single random walk, and we call (\ref{vsimple}) a many-to-one formula. There are ways of extending this concept to far more complicated branching processes.

For a lower bound on $M_n$, we note that by Cauchy-Schwarz,
\[\P(M_n \geq x) = \P(Y_n(x)\geq 1) \geq \frac{\E[Y_n(x)]^2}{\E[Y_n(x)^2]},\]
and hence we want to calculate the second moment $\E[Y_n(x)^2]$. By counting the number of pairs of particles whose last common ancestor was alive at time $j$ for each $j=0,\ldots,n-1$, we see that
\[\E[Y_x^2] = \E[Y_x] + \sum_{j=0}^{n-1} 2^{2n-j-1} \P(S_{j,n}\geq x, \hsl S'_{j,n} \geq x)\]
where for each $j$, $(S_{j,i}, i\geq 0)$ and $(S'_{j,i}, i\geq0)$ are random walks with step distribution $X$ such that
\begin{itemize}
\item $S_{j,i} = S'_{j,i}$ for all $i\leq j$, and
\item $(S_{j,j+i}-S_{j,j}, i\geq 0)$ and $(S'_{j,j+i}- S'_{j,j}, i \geq 0)$ are independent.
\end{itemize}
Thus a question about the second moment of a branching random walk becomes one about two dependent random walks: a many-to-two formula.

It turns out that this formula can also be greatly generalised, and in fact extends to higher moments. Questions about $k$th moments of branching processes turn into questions about $k$ dependent random walks. 

Several results of this type are already known. A simple version for branching Brownian motion was given by Sawyer \cite{sawyer:branching_diffusions_popn_genetics}. Kallenberg \cite{kallenberg:stability} proved a version for discrete trees, which he calls a ``backward tree formula''. Gorostiza and Wakolbinger \cite{gorostiza_wakolbinger:persistence_criteria} extend Kallenberg's formula to a class of continuous-time processes. Dawson and Perkins generate what they call ``extended Palm formulas'' for historical processes (superprocesses enriched with information on genealogy) in \cite{dawson_perkins:historical_processes}. For the parabolic Anderson model with Weibull upper tails, Albeverio \emph{et al.\ }\cite{albeverio_et_al:annealed_moments} gave a similar result by considering existence and uniqueness of solutions to a Cauchy problem. Bansaye \emph{et al.\ }\cite{bansaye_et_al:limit_theorems} develop a quite general many-to-two lemma for Markov branching processes, allowing particles to be born away from their parent. This list is unlikely to be exhaustive, but reflects the fact that many-to-few results exist in various specialised forms with little in the way of a consistent underlying theory.

The theory in the many-to-one case is much more complete. The single random walk on the right-hand side of the formula can be interpreted as a special particle or \emph{spine} present in the original branching process, and this additional structure can be used to construct and understand changes of measure on the branching system, which turns out to be a powerful tool: see for example \cite{aidekon:convergence_law_min_brw, lyons_et_al:conceptual_llogl_mean_behaviour_bps}.

The aim of this article is to state a quite general $k$th moment formula which we call the many-to-few lemma, but also to develop a corresponding theory involving multiple spines. This underlying structure will allow us to incorporate similar changes of measure to those that have proved so useful for first moment calculations. It should also allow the reader to transfer the many-to-few lemma to processes not covered by our setup.

\vspace{2mm}

There are already several applications of the many-to-few formula either published or underway. To name a few, A\"id\'ekon and Harris \cite{aidekon_harris:survival_prob_killed_bbm} compute moments in order to show that the number of particles hitting a certain level in a branching Brownian motion with killing at the origin converges in distribution in the limit approaching criticality. Both Carmona and Hu \cite{carmona:spread} and D\"oring and Roberts \cite{doering_roberts:catalytic_bps} investigate a catalytic branching model. G\"un, K\"onig and Sekulovi\'c \cite{gun:moment} apply our result to to a branching random walk in random environment.

\vspace{2mm}

The article is arranged as follows. Mostly we work in continuous time, since this is slightly trickier to handle than discrete time. In Section \ref{basic_setup} we give a summary of the multi-spine setup, and then state our main result --- the many-to-few lemma --- in Section \ref{many_to_few_sec}. Since the resulting formula can be difficult to handle, we follow this with a discussion of some special cases and fully worked examples in Section \ref{examples_sec}. In Section \ref{full_setup} we give full constructions of the measures and filtrations used in the theory, and then prove the many-to-few lemma in Section \ref{proof_sec}. We then give an extension in Section \ref{diff_times_sec} that allows us to take sums over particles at two different times. Finally, in Section \ref{discrete_sec} we give a discrete time version of the many-to-few lemma.

\section{Multiple spines}\label{basic_setup}
In this section we detail the general continuous-time branching process that we will consider for most of the article, and introduce the multi-spine setup that will be needed to state our main result.

We consider a branching process starting with one particle at $x$ under a probability measure $\P_x$. This particle moves within a measurable space $(J,\mathcal B)$ according to a Markov process with generator $\mathcal{M}$. When at position $y$, the particle branches at rate $R(y)$ (more precisely, the probability that the particle has not branched by time $t$ is $e^{-\int_0^t R(X(s)) ds}$ where $X(s)$ is the position of the particle at time $s$), dying and giving birth to a random number of new particles with distribution $\mu^{(y)}$, supported on $\{0,1,2,\ldots\}$. Each of these particles then independently repeats the stochastic behaviour of its parent from its starting point.

We denote by $N(t)$ the set of all particles alive at time $t$. For a particle $v\in N(t)$ we let $\sigma_v$ be the time of its birth and $\tau_v$ the time of its death, and define $\sigma_v(t) = \sigma_v \wedge t$ and $\tau_v(t) = \tau_v \wedge t$. If $v\in N(t)$ then for $s\leq t$ we write $X_v(s)$ for the position of the unique ancestor of $v$ alive at time $s$. If $v$ has 0 children then we write $X_v(s) = \Delta$ for all $t\geq\tau_v$, where $\Delta\not\in J$ is a graveyard state.

\subsection{The $k$-spine measures $\P^k_x$ and $\Q^k_x$}\label{pq_description}

We define new measures $\P^k_x$ and $\Q^k_x$ under which there are $k$ distinguished lines of descent, which we call spines. Briefly, $\P^k_x$ is simply an extension of $\P_x$ in that all particles behave as in the original branching process; the only difference is that some particles carry marks showing that they are part of a spine. Under $\Q^k_x$ the marked particles will behave differently from under $\P^k_x$, but non-marked particles will be unchanged. We will eventually see the relationship between $\Q^k_x$ and $\P^k_x$ in terms of a Radon-Nikodym derivative, but for now it is enough to state their properties.

Under $\P^k_x$ particles behave as follows:
\begin{itemize}
\item We begin with one particle at position $x$ which (as well as its position) carries $k$ marks $1,2,\ldots,k$.
\item All particles move as Markov processes with generator $\mathcal{M}$, independently of each other given their birth times and positions, just as under $\P_x$.
\item We think of each of the marks $1,\ldots,k$ as distinguishing a particular line of descent or ``spine'', and define $\xi^i_t$ to be the position of whichever particle carries mark $i$ at time $t$.
\item A particle at position $y$ carrying $j$ marks $b_1 < b_2 < \ldots < b_j$ at time $t$ branches at rate $R(y)$, dying and being replaced by a random number of particles with law $\mu^{(y)}$ independently of the rest of the system, just as under $\P_x$.
\item Given that $a$ particles $v_1,\ldots,v_a$ are born at a branching event as above, the $j$ marks each choose a particle to follow independently and uniformly at random from amongst the $a$ available. Thus for each $1\leq l\leq a$ and $1\leq i \leq j$ the probability that $v_l$ carries mark $b_i$ just after the branching event is $1/a$, independently of all other marks.
\item If a particle carrying $j>0$ marks $b_1 < b_2 < \ldots < b_j$ dies and is replaced by 0 particles, then its marks remain with it as it moves to the graveyard state $\Delta$.
\end{itemize}

Again we emphasise that under $\P^k_x$, the system behaves exactly as under $\P_x$ except that some particles carry extra marks showing the lines of descent of $k$ spines. We call the collection of particles that have carried at least one spine up to time $t$ the \emph{skeleton} at time $t$, and write $\skel(t)$; see Figure \ref{skelfig}. Of course $\P^k_x$ is not defined on the same $\sigma$-algebra as $\P_x$. We let $\Fg^k_t$ be the filtration containing all information about the system (including the $k$ spines) up to time $t$; then $\P^k_x$ is defined on $\Fg^k_\infty$. This will be clarified in Section \ref{full_setup}.

  \begin{figure}[h!]
  \centering
   \includegraphics[width=11cm]{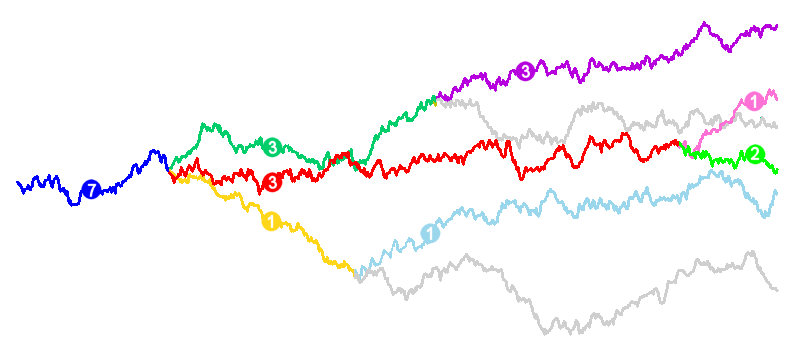}
  \caption{Each particle in the skeleton is a different colour, and particles not in the skeleton are drawn in grey. The numbers show how many spines are carried by each particle. \label{skelfig}}
  \end{figure}

Now, for each $n\geq0$ and $y\in\Rb$ let 
\[m_n(y) = \sum_{a\in \Z_+} a^n \mu^{(y)}(a),\]
the $n$th moment of the offspring distribution. Define
\[\mu_n^{(y)}(a) = \frac{a^n \mu^{(y)}(a)}{m_n(y)}, \hs\hs a\in \Z_+ \hsl;\]
$\mu_n^{(y)}$ is called the $n$th \emph{size-biased} distribution with respect to $\mu^{(y)}$. Let
\[\alpha_n(y) = (m_n(y)-1)R(y).\]
For $1\leq i,j\leq k$ define $T(i,j)$ to be the first split time of the $i$th and $j$th spines, i.e.\,the first time at which marks $i$ and $j$ are carried by different particles. Let $D(v)$ be the total number of marks carried by particle $v$.
%Define the filtrations
%\[\Gg^{\{j\}}_t := \sigma\left( \xi^j_s, s\leq t; T(j,i)\wedge t, 1\leq i\leq k \right)\]
%so that $\Gg^{\{j\}}_t$ knows about the $j$th spine up to time $t$: its position, whether it has died, which other spines it has split from and at which times it split from those other spines.

Suppose that $\zeta(X,t)$ is a functional of a process $(X_t, t\geq0)$ such that if $(X_t, t\geq0)$ is a Markov process with generator $\mathcal M$ then $\zeta(X,t)$ is a non-negative unit-mean martingale with respect to the natural filtration of $(X_t, t\geq0)$. For example if $X$ is a Brownian motion on $\Rb$ then we might take $\zeta(X,t) = e^{X_t - t/2}$. We will sometimes slightly abuse notation by writing $\zeta(X_v,t)$, or even $\zeta(v,t)$, where $v\in N(t)$. Since $\zeta(X,t)$ must be measurable with respect to $\sigma(X_s, s\leq t)$, it does not matter that $X_v(u)$ is not defined for $u>t$.

Under $\Q^k_x$ particles behave as follows:
\begin{itemize}
\item We begin with one particle at position $x$ which (as well as its position) carries $k$ marks $1,2,\ldots,k$.
\vspace{-5.5mm}
\item Just as under $\P^k_x$, we think of each of the marks $1,\ldots,k$ as a spine, and write $\psi^i_t$ for whichever particle carries mark $i$ at time $t$, and $\xi^i_t$ for its position.
\vspace{-1.5mm}
\item A particle with mark $i$ at time $t$ moves as if under the changed measure $Q^i_x|_{\sigma(\xi^i_s, s\leq t)} := \zeta(\xi^i,t)\P^k_x|_{\sigma(\xi^i_s, s\leq t)}$.
\vspace{-1.5mm}
\item A particle at position $y$ carrying $j$ marks at time $t$ branches at rate $m_j(y)R(y)$, dying and being replaced by a random number of particles with law $\mu_j^{(y)}$ independently of the rest of the system.
\vspace{-5.5mm}
\item Given that $a$ particles $v_1,\ldots,v_a$ are born at such a branching event, the $j$ marks each choose a particle to follow independently and uniformly at random.
\vspace{-1.5mm}
\item Particles not in the skeleton (those carrying no marks) behave just as under $\P$, branching at rate $R(y)$ and giving birth to numbers of particles with law $\mu^{(y)}$ when at $y$.
\end{itemize}
In other words, under $\Q^k_x$ spine particles move as if weighted by the martingale $\zeta$, they breed at an modified rate, and they give birth to size-biased numbers of children. The birth rate and number of children depend on how many marks the spine particle is carrying, whereas the motion does not.

\section{The many-to-few lemma}\label{many_to_few_sec}
If $Y$ is measurable with respect to $\Fg^k_t$, then it can be expressed as the sum
\begin{equation}\label{Ydecomp}
Y = \sum_{v_1,\ldots, v_k\in N(t)\cup\{\Delta\}} Y(v_1,\ldots,v_k) \ind_{\{\psi^1_t = v_1, \ldots, \psi^k_t = v_k\}}
\end{equation}
where for any $v_1,\ldots,v_k\in N(t)\cup\{\Delta\}$, the random variable $Y(v_1,\ldots,v_k)$ is $\Fg_t$-measurable. We sometimes write $Y(\psi^1_t,\ldots, \psi^k_t)$ for $Y$, but emphasise that $Y$ need not depend only on the $k$ spines and can depend on the entire process up to time $t$.

For example, if $k=2$ we might take $Y=\ind_{\{\xi^1_t\geq x, \hsl \xi^2_t \geq x\}}$ and then $Y(v_1,v_2) = \ind_{\{X_{v_1}(t)\geq x, X_{v_2}(t)\geq x\}}$. This choice of $Y$ would allow us to calculate
\[\E[\#\{v\in N(t) : X_v(t)\geq x\}^2].\]

To prove that $Y$ can be written in the form (\ref{Ydecomp}), one can generalize the argument on pages 24-25 of \cite{roberts:thesis}. Since this is a purely measure-theoretic argument and will be clear for most $Y$ of interest, we leave it as an exercise for the reader. We now state our main result (in continuous time) in full. A similar statement will be given in discrete time in Section \ref{discrete_sec}.

\begin{lem}[Many-to-few]\label{many_to_few}
For any $k\geq1$ and $\Fg^k_t$-measurable $Y$ as above,
\begin{multline*}
\P_x\Bigg[\sum_{v_1,\ldots,v_k \in N(t)} Y(v_1,\ldots, v_k)\ind_{\{\zeta(v_i,t) > 0 \hsl \forall i=1,\ldots,k\}}\Bigg]\\
= \Q^k_x\Bigg[ Y \prod_{v\in \skel(t)} \frac{\zeta(X_v,\sigma_v(t))}{\zeta(X_v,\tau_v(t))}\exp\left( \int_{\sigma_v(t)}^{\tau_v(t)} \alpha_{D(v)}(X_v(s))ds \right)\Bigg].
\end{multline*}
\end{lem}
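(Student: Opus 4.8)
The plan is to prove the identity by induction on the structure of the branching tree, conditioning first on the behaviour at the initial branch point and then comparing the two sides recursively; this is really a change-of-measure computation in disguise, so the cleanest route is to identify the Radon--Nikodym derivative of $\Q^k_x$ with respect to $\P^k_x$ on $\Fg^k_t$ explicitly, and then check that integrating against it reproduces the claimed weight. First I would set up the left-hand side: since each particle $v_i\in N(t)$ with $\zeta(v_i,t)>0$ corresponds to a line of descent, I rewrite
\[
\P_x\!\left[\sum_{v_1,\ldots,v_k\in N(t)} Y(v_1,\ldots,v_k)\ind_{\{\zeta(v_i,t)>0\ \forall i\}}\right]
= \P^k_x\!\left[Y\,\ind_{\{\zeta(\xi^i,t)>0\ \forall i\}}\right],
\]
using the defining property of $\P^k_x$ that the $k$ spines are chosen uniformly at each split: the probability that a fixed assignment of marks produces the configuration $(\xi^1_t,\ldots,\xi^k_t)=(v_1,\ldots,v_k)$ is exactly the product over skeleton vertices of $1/(\text{litter size})$, which is precisely the combinatorial factor that turns the deterministic $\P_x$-sum into a $\P^k_x$-expectation.

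Next I would compute the density $\frac{d\Q^k_x}{d\P^k_x}\big|_{\Fg^k_t}$. This factorizes over the skeleton because, given the skeleton's shape and the birth times/positions of its vertices, the non-skeleton subtrees and the spine-choosing variables have the same law under both measures. For each skeleton vertex $v$ carrying $D(v)$ marks, three modifications occur relative to $\P^k_x$: (i) its spatial motion on $[\sigma_v(t),\tau_v(t)]$ is tilted by the martingale $\zeta$, contributing a factor $\zeta(X_v,\tau_v(t))/\zeta(X_v,\sigma_v(t))$; (ii) its branch rate is changed from $R$ to $m^{D(v)}R$, contributing the usual exponential Girsanov-type factor $\frac{m^{D(v)}(X_v(\tau_v))R(X_v(\tau_v))}{R(X_v(\tau_v))}\exp\!\big(-\int_{\sigma_v(t)}^{\tau_v(t)}(m^{D(v)}(X_v(s))-1)R(X_v(s))\,ds\big)$ when $v$ actually branches before $t$ (with only the exponential survival factor if $\tau_v>t$); and (iii) its offspring count is size-biased from $\mu_y$ to $\mu^{D(v)}_y$, contributing $\mu_y(a)/\mu^{D(v)}_y(a) = m^{D(v)}(y)/a^{D(v)}$ where $a$ is the litter size and $y$ the branch position. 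Multiplying these and using $D(v) = \sum_i \ind_{\{i\text{-th spine passes through }v\}}$, the litter-size denominators $\prod_v a_v^{-D(v)}$ reassemble exactly into the uniform spine-choice probabilities from the first paragraph, the rate factors $m^{D(v)}(X_v(\tau_v))R(X_v(\tau_v))$ cancel against their counterparts, and what survives is
\[
\frac{d\Q^k_x}{d\P^k_x}\Big|_{\Fg^k_t}
= \prod_{v\in\skel(t)} \frac{\zeta(X_v,\tau_v(t))}{\zeta(X_v,\sigma_v(t))}\exp\!\left(-\int_{\sigma_v(t)}^{\tau_v(t)}\big(m^{D(v)}(X_v(s))-1\big)R(X_v(s))\,ds\right)\times(\text{spine-choice factors}).
\]
Combining with the first paragraph and inverting the density then yields exactly the right-hand side of the lemma.

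The main obstacle — and the reason the full paper defers to Section~\ref{full_setup} — is making the density computation rigorous, since $\P^k_x$ and $\Q^k_x$ live on a space rich enough to record an unbounded random number of skeleton vertices, each with its own motion, litter size, and spine-routing data, and the ``factorization over the skeleton'' step needs a careful conditional decomposition: one must condition on the skeleton's branching structure (its tree shape, and the birth/death times and positions of its vertices), verify that the non-skeleton subtrees hang off identically under both measures, and handle the boundary cases where a skeleton particle is still alive at time $t$ (so $\tau_v(t)=t$ and it has not yet branched) or has died leaving zero offspring (so it sits at $\Delta$ but retains its marks). Once the state space and filtration are pinned down so that these conditionings are legitimate, the algebraic cancellations are routine. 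I would therefore structure the proof as: (1) reduce the $\P_x$-sum to a $\P^k_x$-expectation via the uniform-choice identity; (2) decompose $\Fg^k_t$ by conditioning on the skeleton and establish the product form of the density by treating motion, branching times, and offspring sizes separately using the Markov branching property; (3) assemble the factors and simplify, tracking the indicator $\ind_{\{\zeta(\xi^i,t)>0\}}$ which ensures the density is a genuine likelihood ratio (no division by zero) and matches the event on the left.
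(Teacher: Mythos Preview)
Your overall plan --- compute the density $d\Q^k_x/d\P^k_x$ on $\F^k_t$ as a product over skeleton vertices, invert it, and combine with the $\P_x\leftrightarrow\P^k_x$ relation --- is essentially the paper's argument; the paper packages that density as $\tilde\zeta^k(t)$ (Section~\ref{measure_change}) and then organizes the final step via the Gibbs--Boltzmann weights $\Q^k_x(\xi^1_t=u_1,\ldots\mid\F_t)$ rather than by conditioning on the skeleton. But two concrete errors in your bookkeeping prevent the argument from going through as written.

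First, your opening identity
\[
\P_x\Bigl[\sum_{v_1,\ldots,v_k\in N(t)}Y(v_1,\ldots,v_k)\ind_{\{\zeta>0\}}\Bigr]
=\P^k_x\bigl[Y\,\ind_{\{\zeta>0\}}\bigr]
\]
is false. By the very definition of $\P^k_x$ the right-hand side equals
$\P_x\bigl[\sum Y(v_1,\ldots,v_k)\ind_{\{\zeta>0\}}\prod_{v\in\skel\setminus N(t)}A_v^{-D(v)}\bigr]$:
the uniform-choice weights are \emph{part of} $\P^k_x$, they do not vanish. The correct identity carries an extra factor $\prod_v A_v^{D(v)}$ inside the $\P^k_x$-expectation.

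Second, in (iii) you have the ratio upside down. You are computing $d\Q^k_x/d\P^k_x$, so the offspring contribution at a branch point is $\mu^{D(v)}_y(a)/\mu_y(a)=a^{D(v)}/m^{D(v)}(y)$, not its reciprocal. Once corrected, the $m^{D(v)}$ in the denominator cancels the rate factor from (ii), and what remains is exactly the $\prod_v A_v^{D(v)}$ needed to repair the first error. Note also that the spine-routing mechanism is \emph{identical} under $\P^k_x$ and $\Q^k_x$ (uniform in both) and contributes nothing to the density, so your sentence ``the litter-size denominators reassemble into the uniform spine-choice probabilities'' misidentifies what is cancelling what: the $A_v^{D(v)}$ from the density cancels the $A_v^{-D(v)}$ built into the definition of $\P^k_x$, not anything coming from spine choices.

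With these two fixes you recover precisely $\tilde\zeta^k(t)$ and the argument is complete; the paper's proof is the same computation, just routed through conditioning on $\F_t$ under $\Q^k_x$ and the Gibbs--Boltzmann lemma instead of decomposing the density ingredient by ingredient.
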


\noindent
We will see in the next section that although the quantity on the left-hand side depends on all the particles in the branching Brownian motion, for many natural choices of $Y$ the right-hand side will depend only the particles in the skeleton, of which there are at most $k$ at any time. Hence the name ``many-to-few''.

%Note from the fact that under $\Q^k_x$, particles in the skeleton have size-biased numbers of children, we have that all particles $v\in\skel(t)$ have $\zeta(v,t\wedge \tau_v)>0$ for all $t$ under $\Q^k_x$. Hence under $\P_x$ only those particles $u$ with $\zeta(u,t)>0$ can possibly be counted. This is why the indicator function appears in the $\P_x$-expectation on the left-hand side.

Note that the many-to-few lemma is only interested in particles $v\in N(t)$ such that $\zeta(v,t)>0$. This can be useful in applications: if we wish to introduce a model incorporating killing of particles in some subset of $J$, we can choose $\zeta$ to be zero on $J$ so that we only count those particles still alive at time $t$.

\section{Examples}\label{examples_sec}
Lemma \ref{many_to_few} contains a large amount of information within a single identity. Here we expand some of that information by working out the details of some simple cases.

\subsection{The many-to-one formula}
If $k=1$, then the skeleton simply consists of a single spine particle $\xi$. We obtain
\[\P_x\Bigg[\sum_{v\in N(t)} Y(v) \ind_{\{\zeta(v,t)>0\}}\Bigg] = \Q_x^1\left[Y\frac{1}{\zeta(\xi,t)}e^{\int_0^t \alpha_1(\xi_s)ds}\right].\]

\vspace{2mm}

\noindent
For example, if $x=0$ and our branching process is branching Brownian motion (i.e.~$\mathcal{M}=\frac12 \Delta$) with $\mu^{(y)}(2)\equiv 1$ (binary branching) and $R\equiv \beta>0$, we might choose $Y = f(\xi_t)$ for some function $f:\R\to\R$, and $\zeta(X,t) = e^{\lambda X_t - \lambda^2 t/2}$. We then obtain
\[\P_x\Bigg[\sum_{v\in N(t)} f(X_v(t)) \Bigg] = \Q_x^1\left[f(\xi_t)e^{-\lambda \xi_t + \lambda^2 t/2 + \beta t}\right].\]
To carry out an actual calculation, take $f(z) = \ind_{\{z\geq \lambda t\}}$ and $\lambda>0$ to get
\[\P_0[\#\{v\in N(t) : X_v(t)\geq \lambda t\}] = \Q_0^1\left[\ind_{\{\xi_t\geq \lambda t\}} e^{-\lambda \xi_t + \lambda^2 t/2 + \beta t}\right] \leq e^{(\beta-\lambda^2/2)t}.\]
We will use the many-to-two formula to get a similar lower bound in the next section.

\subsection{The many-to-two formula}
If $k=2$, then under $\Q_x^2$ the first particle in the skeleton branches at rate $m_2(y)R(y)$ when at position $y$, into $a$ particles with probability $a^2 \mu^{(y)}(a)/m_2(y)$. At such a branching event, the two marks follow different particles with probability $1-1/j$. Thus $T(1,2)$ --- the time at which the two spines split --- satisfies
\[\Q_x^2(T(1,2)\geq t) = e^{-\int_0^t (m_2(\xi^1_s)-m_1(\xi^1_s))R(\xi^1_s)ds}.\]
Writing out the many-to-two formula and simplifying, we obtain
\begin{align*}
&\P_x\Bigg[\sum_{v_1, v_2\in N(t)} Y(v_1,v_2) \ind_{\{\zeta(v_i,t)>0 \hsl \forall i=1,2\}}\Bigg]\\
&= \Q_x^2 \left[Y\frac{\zeta(\xi^1,T(1,2)\wedge t)}{\zeta(\xi^1,t)\zeta(\xi^2,t)} e^{\int_0^{T(1,2)\wedge t} (m_2(\xi^1_s)-2m_1(\xi^1_s)+1)R(\xi^1_s)ds + \int_0^t \alpha_1(\xi^1_s) ds + \int_0^t \alpha_1(\xi^2_s) ds}\right]\\
&= \Q_x^2 \left[\left.Y\frac{1}{\zeta(\xi^1,t)}e^{\int_0^t \alpha_1(\xi^1_u) du}\right| T(1,2)\geq t\right]\\
&\hspace{8mm} + \int_0^t \Q_x^2 \left[\left. Y\frac{\zeta(\xi^1,s)}{\zeta(\xi^1,t)\zeta(\xi^2,t)} (\alpha_2(\xi^1_s)-\alpha_1(\xi^1_s)) e^{\int_0^t \alpha_1(\xi^1_u) du + \int_s^t \alpha_1(\xi^2_u) du}\right|T(1,2)=s\right] ds.
\end{align*}

\vspace{2mm}

\noindent
\textbf{Simplification: Many-to-two with binary branching at constant rate with $\zeta\equiv 1$}\\
For a very simple version of the many-to-two formula, suppose that $\mu^{(y)}(2)=1$ for all $y$ and $R(y)=r$ for all $y$, so under $\P_x$ we have binary branching at constant rate $r$. Suppose also that $\zeta\equiv1$ and that $Y$ depends only on the positions of the two spines, $Y= f(\xi^1_t,\xi^2_t)$. Then
\[\P_x\Bigg[\sum_{v_1, v_2\in N(t)} f(X_{v_1}(t), X_{v_2}(t))\Bigg] = \Q_x^2[f(\xi^1_t, \xi^2_t) e^{2rt + r(T(1,2)\wedge t)}].\]
But $T(1,2)$ is exponentially distributed with parameter $2r$ (the particle carrying the two spines $1$ and $2$ breeds at rate $2^2r=4r$, and at each of these events the two spines follow different children with probability $1/2$), and the motion of the spines is the same under $\Q_x^2$ as under $\P_x^2$, so
\[\P_x\Bigg[\sum_{v_1, v_2\in N(t)} f(X_{v_1}(t),X_{v_2}(t))\Bigg] = e^{rt}\P_x^2[f(\xi^1_t,\xi^1_t)|T(1,2)>t] + \int_0^t 2r e^{r(2t-s)} \P_x^2[f(\xi^1_t,\xi^2_t)|T(1,2)=s] ds.\]

\vspace{2mm}

\noindent
\textbf{Simplification: Many-to-two with homogeneous breeding}\\
To include slightly more generality than the simplification above, suppose that $m_2(y)\equiv m_2$, $m_1(y)\equiv m_1$ and $R(y)\equiv \beta>0$ do not depend on $y$. Then we may write
\begin{multline*}
\P_x\Bigg[\sum_{v_1, v_2\in N(t)} Y(v_1,v_2) \ind_{\{\zeta(v_i,t)>0 \hsl \forall i=1,2\}}\Bigg]\\
= e^{ (m_1-1)\beta t} \Q_x^2 \left[\left.Y\frac{1}{\zeta(\xi^1,t)}\right|T(1,2)\geq t\right] + \int_0^t (m_2-m_1)\beta e^{m_1\beta(2t - s)} \Q_x^2 \left[\left. Y\frac{\zeta(\xi^1,s)}{\zeta(\xi^1,t)\zeta(\xi^2,t)} \right|T(1,2)=s\right] ds.
\end{multline*}

\vspace{2mm}

\noindent
\textbf{Example: Large deviations for branching Brownian motion}\\
Fix $\lambda,\beta>0$. In the previous section we saw that for branching Brownian motion with binary branching at rate $\beta$, we have
\[\P_0(\exists v\in N(t) : X_v(t)\geq \lambda t) \leq \P_0[\#\{v\in N(t) : X_v(t)\geq \lambda t\}] \leq e^{(\beta-\lambda^2/2)t}.\]
We will now use the many-to-two lemma to give a lower bound on the same probability when $\beta-\lambda^2/2<0$. We use the random variable
\[Y = \ind_{\{\xi^1_s \leq \lambda s +1 \hsl \forall s\leq t,\hs \xi^2_s \leq \lambda s + 1 \hsl \forall s\leq t, \hs \xi^1_t \geq \lambda t, \hs \xi^2_t \geq \lambda t\}}\]
and the same martingale as before, $\zeta(X,t) = e^{\lambda X_t - \lambda^2 t/2}$. These choices give
\begin{align*}
&\P_0\left[\#\{v\in N(t) : X_v(s) \leq \lambda s+1 \hsl\forall s\leq t, \hs X_v(t)\geq \lambda t\}^2\right]\\
&= e^{\beta t} \Q_0^2 \left[\left.\ind_{\{\xi^1_s \leq \lambda s +1 \hsl \forall s\leq t,\hsl \xi^1_t \geq \lambda t\}}\frac{1}{e^{\lambda \xi^1_t - \lambda^2 t/2}}\right|T(1,2)\geq t\right]\\
&\hspace{6mm} + \int_0^t 2\beta e^{\beta(2t - s)} \Q_x^2 \left[\left. \ind_{\{\xi^1_s \leq \lambda s +1 \hsl \forall s\leq t,\hsl \xi^2_s \leq \lambda s + 1 \hsl \forall s\leq t, \hsl \xi^1_t \geq \lambda t, \hsl \xi^2_t \geq \lambda t\}} \frac{e^{\lambda \xi^1_s - \lambda^2 s/2}}{e^{\lambda \xi^1_t + \lambda \xi^2_t - \lambda^2 t}} \right|T(1,2)=s\right] ds\\
&\leq e^{\beta t-\lambda^2 t/2} + 2\beta\int_0^t e^{\beta(2t-s)}\frac{e^{\lambda^2 s/2 + \lambda}}{e^{\lambda^2 t}} ds.
\end{align*}
When $\beta-\lambda^2/2<0$, this is at most $\left(1+\frac{2\beta e^{\lambda}}{\lambda^2/2-\beta}\right)e^{(\beta-\lambda^2/2)t}$.

Using the many-to-one lemma with the same martingale and $Y = \ind_{\{\xi_s \leq \lambda s +1 \hsl \forall s\leq t, \hs \xi_t \geq \lambda t\}}$, we get
\begin{multline*}
\P_0\left[\#\{v\in N(t) : X_v(s) \leq \lambda s+1 \hsl\forall s\leq t, \hs X_v(t)\geq \lambda t\}\right] = \Q_0^1\left[\ind_{\{\xi_s \leq \lambda s +1 \hsl \forall s\leq t, \hs \xi_t \geq \lambda t\}}\frac{1}{e^{\lambda \xi_t - \lambda^2 t/2}} e^{\beta t}\right]\\
\geq e^{-\lambda} e^{(\beta-\lambda^2/2)t} \Q_0^1(\xi_s\leq \lambda s+1 \hsl\forall s\leq t, \hs \xi_t \geq \lambda t).
\end{multline*}
Now, under $\Q_0^1$, the process $(\xi_s,s\geq0)$ moves as if under the changed measure $Q_0|_{\sigma(\xi_s,s\leq t)}:= e^{\lambda \xi_t - \lambda^2 t/2}\P^1_0|_{\sigma(\xi_s,s\leq t)}$. By Girsanov's theorem, $(\xi_s-\lambda s, s\geq0)$ is therefore a standard Brownian motion, and we have
\[\P_0\left[\#\{v\in N(t) : X_v(s) \leq \lambda s+1 \hsl\forall s\leq t, \hs X_v(t)\geq \lambda t\}\right] \geq e^{-\lambda} e^{(\beta-\lambda^2/2)t}\P_0^1(\xi_s\leq 1 \hsl\forall s\leq t, \hs \xi_t \geq 0).\]
It is an easy exercise using the reflection principle to prove that this is at least a positive constant times $e^{(\beta-\lambda^2/2)t}t^{-3/2}$.

Putting these two calculations together, and using the inequality (from Cauchy-Schwarz) $\P(X>0) \geq \P[X]^2/\P[X^2]$, we have that for $\beta-\lambda^2/2<0$,
\begin{align*}
\P_0(\exists v\in N(t) : X_v(t) \geq \lambda t) &\geq \P_0(\exists v\in N(t) : X_v(s) \leq \lambda s+1 \hsl\forall s\leq t, \hs X_v(t)\geq \lambda t)\\
&\geq \frac{\P_0\left[\#\{v\in N(t) : X_v(s) \leq \lambda s+1 \hsl\forall s\leq t, \hs X_v(t)\geq \lambda t\}\right]^2}{\P_0\left[\#\{v\in N(t) : X_v(s) \leq \lambda s+1 \hsl\forall s\leq t, \hs X_v(t)\geq \lambda t\}^2\right]}\\
&\geq \frac{c e^{2(\beta-\lambda^2/2)t}t^{-3}}{e^{(\beta-\lambda^2/2)t}} = ce^{(\beta-\lambda^2/2)t} t^{-3/2}
\end{align*}
for some constant $c>0$. Thus, together with the upper bound from the previous section, we have that when $\lambda>0$ and $\beta-\lambda^2/2<0$,
\[\lim_{t\to\infty} \frac{1}{t}\log\P(\exists v\in N(t) : X_v(t) \geq \lambda t) = \beta-\lambda^2/2.\]

With only slightly more work, it is possible to show that the number of particles above $\lambda t$ at time $t$ when $\beta-\lambda^2/2>0$ is approximately $e^{\beta-\lambda^2/2}$. Similar techniques can be used to prove far more delicate estimates.

% \vspace{2mm}

% \noindent
% \textbf{Example 5: Many-to-two with $\zeta\equiv 1$}\\
% If $k=2$ and we choose $\zeta\equiv 1$, then --- as in Example 3 --- the only difference between $\Q_x^2$ and $\P_x^2$ is the distribution of $T(1,2)$. Thus
% \begin{multline*}
% \P_x\Bigg[\sum_{v_1, v_2\in N(t)} Y(v_1,v_2)\Bigg] = \P_x^1 \left[\left.Y e^{\int_0^t \alpha_1(\xi^1_u) du}\right|T(1,2)\geq t\right]\\
% + \int_0^t \P_x^2 \left[\left. (\alpha_2(\xi^1_s)-\alpha_1(\xi^1_s)) Y  e^{\int_0^t \alpha_1(\xi^1_u) du + \int_s^t \alpha_1(\xi^2_u) du}\right|T(1,2)=s\right] ds.
% \end{multline*}

\subsection{The many-to-few formula}
In this section we will apply the many-to-few lemma to a very simple model. There are many other ways of doing the same calculations, but we hope this will allow the reader to see how the many-to-few lemma is --- despite appearances --- relatively intuitive even for higher moments. We then mention a further extension of the theory, which we will not detail in this article.

\vspace{2mm}

\noindent
\textbf{Example: Yule tree}\\
We take the simplest possible choices: $Y\equiv 1$, each $\zeta^j\equiv 1$, $A\equiv 2$ (purely binary branching, so $m_k \equiv 2^k$) and $R\equiv 1$. This completely ignores the spatial movement of the particles, so we shall simply be calculating the moments of the number of particles in a Yule tree (a continuous-time Galton-Watson process with 2 children at every branch point). Let $T = \inf_{1\leq i,j \leq k} T(i,j)$ be the first time at which any two spines split, and let $S_j$ be the event that at time $T$, $j$ of the spines follow the first child and $k-j$ follow the second child.
\begin{align*}
\E\big[|N(t)|^k\big] &= \Q^k\Bigg[\prod_{v\in\skel(t)} e^{(2^{D(v)}-1)(\tau_v(t)-\sigma_v(t))}\Bigg]\\
&=\Q^k\left[e^{(2^k-1)t}\ind_{\{T>t\}}\right] + \sum_{j=1}^{k-1}\int_0^t \Q^k\Bigg[\prod_{v\in\skel(t)} e^{(2^{D(v)}-1)(\tau_v(t)-\sigma_v(t))} \ind_{\{T\in ds\}} \ind_{S_j}\Bigg]\\
&= e^t + \sum_{j=1}^{k-1} \binom{k}{j} \int_0^t e^s \E\big[|N(t-s)|^j\big]\E\big[|N(t-s)|^{k-j}\big] ds.
\end{align*}
Thus $\E[|N(t)|^2] = 2e^{2t} - e^t$, $\E[|N(t)|^3] = 6e^{3t} - 6e^{2t} + e^t$, $\E[|N(t)|^4] = 24 e^{4t} - 36 e^{3t} + 14 e^{2t} + 3 e^t$, and so on.

\vspace{2mm}

\noindent
\textbf{Extension: Stopping lines}\\
Rather than looking at a fixed time $t$, we might like, for example, to count the number of particles that hit some subset of $J$ at the time they hit. The theory of {\em stopping lines} allows us to extend the many-to-few lemma to cover this kind of calculation. However, the concept of a stopping line involves a large amount of notation in itself, and combining this with the many-to-few lemma would make this article longer than we would like. We therefore leave it to the reader to extend our methods in this way. A detailed discussion can be found in \cite{maillard:thesis}.

\section{Multiple spines and changes of measure}\label{full_setup}
Our main aim in this section is to give full details of the setup introduced in Section \ref{basic_setup}.

\subsection{Trees}
We use the \emph{Ulam-Harris labelling system}: define a set of labels
\[\Omega := \{\emptyset\}\cup\bigcup_{n\in\mathbb{N}}\mathbb{N}^n.\]
We often call the elements of $\Omega$ \emph{particles}. We think of $\emptyset$ as our inital ancestor, and $(3,2,7)$ for example as representing the seventh child of the second child of the third child of the initial ancestor. For a particle $u\in\Omega$ we define $|u|$, the generation of $u$, to be the length of $u$ (so if $u\in\mathbb{N}^n$ then $|u|=n$, and $|\emptyset|=0$). For two labels $u,v\in\Omega$ we write $uv$ for the concatenation of $u$ and $v$, taking $\emptyset u = u\emptyset = u$. We write $u\leq v$ and say that $u$ is an \emph{ancestor} of $v$ if there exists $w\in\Omega$ such that $uw = v$.

We define $\mathbb{T}$ to be the set of all \emph{trees}: subsets $\tau\subseteq\Omega$ such that
\vspace{-2mm}
\begin{itemize}
\item $\emptyset\in\tau$: the initial ancestor is part of $\tau$;
\vspace{-2mm}
\item for all $u,v\in\Omega$, $uv\in\tau \Rightarrow u\in\tau$: if $\tau$ contains a particle then it contains all the ancestors of that particle;
\vspace{-2mm}
\item for each $u\in\tau$, there exists $A_u\in\{0,1,2,\ldots\}$ such that for $j\in\mathbb{N}$, $uj\in\tau$ if and only if $1\leq j\leq A_u$: each particle in $\tau$ has a finite number of children.
\end{itemize}

\subsection{Marked trees}
Since we wish to have a particular view of trees, as systems evolving in time and space, we define a \emph{marked tree} to be a set $T$ of triples of the form $(u, l_u, X_u)$ such that $u\in\Omega$, the set
\[\hbox{tree}(T):=\{u : \exists\hsl l_u,X_u \hbox{ such that } (u,l_u,X_u)\in T\}\]
forms a tree, $l_u \in [0,\infty)$ is the \emph{lifetime} of $u$, and, setting $\sigma_u:=\sum_{v < u}l_v$ and $\tau_u:=\sum_{v\leq u}l_u$,
\[X_u : [ \sigma_u, \tau_u ) \to J\]
is the \emph{position function} of $u$. We think of the inital ancestor $\emptyset$ moving around in space according to its position function $X_\emptyset$ until time $l_\emptyset$. It then disappears and a number $A_\emptyset$ of new particles appear; each moves according to its position function for a period of time equal to its lifetime, before being replaced by a number of new particles; and so on.

We let $\mathcal{T}$ be the set of all marked trees, and for $T\in\mathcal{T}$ we define
\[N(t):= \{u\in\hbox{tree}(T) : \sigma_u \leq t < \tau_u\},\]
the set of particles alive at time $t$. For convenience, we extend the position path of a particle $v$ to all times $t\in[0, \tau_v)$, to include the paths of all its ancestors:
\[X_v(t):=\left\{ \begin{array}{ll}
							X_v(t) & \hbox{ if } \sigma_v \leq t < \tau_v \\
							X_u(t) & \hbox{ if } u<v \hbox{ and } \sigma_u  \leq t < \tau_u
				 \end{array} \right.\]
and if $A_v=0$ then we write $X_v(t) = \Delta$ $\forall t\geq\tau_v$.

\subsection{Marked trees with spines}
We now enlarge our state space further to include the notion of \emph{spines}. A spine $\psi$ on a marked tree $\tau$ is a subset of $\hbox{tree}(\tau)$ such that
\vspace{-2mm}
\begin{itemize}
\item{$\emptyset\in\psi$;}
\vspace{-2mm}
\item{$\psi\cap (N(t)\cup\{\Delta\})$ contains exactly one particle for each $t$;}
\vspace{-2mm}
\item{if $v\in \psi$ and $u<v$ then $u\in \psi$;}
\vspace{-2mm}
\item{if $v\in\psi$ and $A_v>0$, then $\exists j\in\{1,\ldots,A_v\}$ such that $vj\in\psi$; otherwise $\psi\cap N(t)=\emptyset$ $\forall t\geq\tau_v$.}
\end{itemize}
If $v \in \psi\cap N(t)$ then we write $\psi_t := v$, and write $\xi_t:=X_v(t)$ for the position of the spine at time $t$. We say that a marked tree with spines is a sequence $(\tau, \psi^1, \psi^2, \psi^3, \ldots)$ where $\tau\in\mathcal{T}$ is a marked tree and each $\psi^j$, $j\geq 1$ is a spine on $\tau$. We let $\Tt$ be the set of all marked trees with spines.

\subsection{Filtrations}
We now work exclusively on the space $\Tt$ of marked trees with spines, and use different filtrations on this space to encapsulate different amounts of information. We give descriptions of these filtrations below; formal definitions are similar to those in \cite{roberts:thesis} and are left to the reader.\\
\\
\noindent
\textbf{The filtration $(\Fg_t, t\geq0)$:} We define $(\Fg_t, t\geq0)$ to be the natural filtration of the branching process --- it does not know anything about the spines.

\vspace{2mm}

\noindent
\textbf{The filtrations $(\F^k_t, t\geq0)$:} For each $k\geq1$ we let $(\F^k_t, t\geq0)$ be the natural filtration for the branching process and the first $k$ spines. It does not know anything about spines $\psi^{k+1}$, $\psi^{k+2}$, \ldots.

\vspace{2mm}

\noindent
\textbf{The filtrations $(\Gg^j_t, t\geq0)$:} For each $j$ we define $\Gg^j_t:=\sigma\left(\xi^j_s, s\in[0,t] \right)$, where $\xi^j_s$ represents the position of the $j$th spine at time $s$. $\Gg^j_t$ contains just the spatial information about the $j$th spine up to time $t$ (and whether or not it has died), but does not know which \emph{nodes} of the tree actually make up that spine.

\vspace{2mm}

\noindent
\textbf{The filtrations $(\Gt^{\{i_1,\ldots,i_j\}}_t, t\geq0)$:} For each $j$-tuple $i_1,\ldots, i_j$ we define
\[\Gt^{\{i_1,\ldots,i_j\}}_t:=\sigma\left(\Gg^k_t\cup \mathcal A^k_t\cup \mathcal C^k_t, k\in\{i_1,\ldots,i_j\}\right).\]
where
\[\mathcal A^k_t = \{\{v=\psi^k_s\}:v\in\Omega,s\in[0,t]\}\]
and
\[\mathcal C^k_t = \{\{v<\psi^k_t, A_v=a, \sigma_v \leq \sigma\}:v\in\Omega,a\geq2,\sigma\in[0,\infty)\}.\]
In words, $\Gt^{\{i_1,\ldots,i_j\}}_t$ contains all the information about spines $\psi^{i_1},\ldots,\psi^{i_j}$ up to time $t$: which nodes make up the spines, their positions, and for all spine nodes not in $N(t)$ (so all the strict ancestors of the spines at time $t$) their lifetimes and number of children.

\vspace{2mm}

\noindent
\textbf{The filtration $(\Gt^k_t, t\geq0)$:} We use the shorthand $\Gt^k_t = \Gt^{\{1,\ldots,k\}}_t$, so that $\Gt^k_t$ knows everything about the first $k$ spines up to time $t$. (Note in particular that $\Gt^k_t$ is different from $\Gt^{\{k\}}_t$, which only knows about the $k$th spine.)

\subsection{Probability measures}
We may now take a probability measure $\P_x$ on $\Tt$ such that under $\P_x$, the system evolves as a branching process starting with one particle at $x$, each particle moves as a Markov process with generator $\mathcal{M}$ independently of all others given its birth time and position, and a particle at position $y$ branches at rate $R(y)$ into a random number of particles with distribution $\mu^{(y)}$. This is the system described in Section \ref{basic_setup}. This measure, however, has no knowledge of the spines (since it sees only the filtration $\Fg_t$). We would like to extend this to a measure on each of the finer filtrations $\Ft^k_t$. To do this, we imagine each spine, at each fission event, choosing uniformly from the available children. Then it is easy to see that, for any particle $u$ in a marked tree $T$ and any $j\geq1$, we would like
\[\hbox{Prob}(u\in \psi^j)=\prod_{v<u}\frac{1}{A_v}.\]
We recall from Section \ref{basic_setup} that if $Y$ is an $\Ft^k_t$-measurable random variable then we can write:
\begin{equation}
Y=\sum_{v_1,\ldots,v_k \in N(t)\cup\{\Delta\}} Y(v_1,\ldots,v_k) \ind_{\{\psi^1_t=v_1, \ldots, \psi^k_t=v_k\}} \label{fdecomp}
\end{equation}
where each $Y(v_1,\ldots,v_k)$ is $\Fg_t$-measurable.

\begin{defn}
We define the probability measure $\P^k_x$ on $(\Tt, \Ft_\infty)$, by setting
\begin{equation}\label{Pdef}
\P^k_x[Y] = \Pb_x\left[\sum_{v_1,\ldots,v_k \in N(t)\cup\{\Delta\}} Y(v_1,\ldots,v_k) \prod_{j=1}^k \prod_{u<v_j}\frac{1}{A_u}\right]
\end{equation}
for each $\F^k_t$-measurable $Y$ with representation (\ref{fdecomp}). Note that $\Pb_x=\P^k_x|_{\Fg_\infty}$.
\end{defn}

\noindent
In summary, particles carrying spines behave just as they would under $\Pb_x$, and when such a particle branches, each spine makes an independent choice uniformly from amongst the available children.

\subsection{Martingales and a change of measure}\label{measure_change}
As in Section \ref{basic_setup} define $T(i,j):=\inf\{t\geq0 : \psi^i_t \neq \psi^j_t\}$, and suppose that we are given a functional $\zeta(\cdot,t)$, $t\geq0$, such that $\zeta(Y,t)$ is a non-negative unit-mean martingale with respect to the natural filtration of the Markov process $(Y_t, t\geq0)$ with generator $\mathcal M$. We call $\zeta$ the single-particle martingale. We recall that we sometimes slightly abuse notation by writing $\zeta(X_v,t)$, or even $\zeta(v,t)$, where $v\in N(t)$. Since $\zeta(Y,t)$ must be measurable with respect to $\sigma(Y_s, s\leq t)$, it does not matter that $X_v(u)$ is not defined for $u>t$.

Recall that we defined $\skel^k(t)$, the skeleton, to be the subtree up to time $t$ generated by those particles carrying at least one of the $k$ spines,
\[\skel^k(t) = \{u\in\Omega : \exists s\leq t, j\leq k \hbox{ such that } \psi^j_s = u\}.\]
We also set
\[D^k(v) = \#\{j\leq k: \exists t \hbox{ with } v = \psi^j_t\}\]
to be the number of spines following particle $v$, and define
\[E^k(v,t) = \exp\left({-\int_{\sigma_v(t)}^{\tau_v(t)} \alpha_{D(v)}(X_v(s))ds}\right)\]
where we recall that $\alpha_n(y) = (m_n(y)-1)R(y)$. Since we will not always know which particles are the spines (when we are working on $\Fg_t$ for example), it will sometimes be helpful to have the above concepts defined for a general skeleton of $k$ particles $u_1, \ldots, u_k$ instead of the spines. For this reason we define
\[\skel_{u_1,\ldots,u_k}(t) = \{v\in \Omega : \sigma_v\leq t, \exists j \hbox{ with } v\leq u_j\},\]
\[D_{u_1,\ldots,u_k}(v) = \#\{j: v\leq u_j\},\]
and
\[E_{u_1,\ldots,u_k}(v,t) = \exp\left({-\int_{\sigma_v(t)}^{\tau_v(t)} \alpha_{D_{u_1,\ldots,u_k}(v)}(X_v(s))ds}\right)\]
so that
\[\skel^k(t) = \skel_{\psi^1_t,\ldots,\psi^k_t}(t), \hs D^k(v) = D_{\psi^1_{\sigma_v},\ldots,\psi^k_{\sigma_v}}(v) \hsl\text{ and }\hsl E^k(v,t) := E_{\psi^1_{\sigma_v},\ldots,\psi^k_{\sigma_v}}(v,t).\]

\begin{rmk}
We note that, with the notation given above,
\[\P^k_x(\psi^1_t = u_1,\ldots,\psi^k_t=u_k|\Fg_t) = \prod_{v\in\skel_{u_1,\ldots,u_k}(t)\setminus N(t)}A_v^{-D_{u_1,\ldots,u_k}(v)}.\]
\end{rmk}

\begin{defn}\label{zetatildedef}
We define an $\Ft^k_t$-adapted (and, in fact, $\Gt^k_t$-adapted) process $\tilde\zeta^k(t)$, $t\geq0$ by
\[\tilde{\zeta}^k(t) = \ind_{\{\zeta(\xi^i,t)>0 \hsl \forall i=1,\ldots,k\}} \prod_{v\in\skel^k(t)} \left(\frac{\zeta(X_v,\tau_v(t))}{\zeta(X_v,\sigma_v(t))} E^k(v,t)\right) \prod_{v\in\skel^k(t)\setminus N(t)}A_v^{D^k(v)}\]
(if $A_v=0$ then we define $\zeta(X_v,\tau_v(t))=0$) and an $\Fg_t$-adapted process $Z^k(t)$, $t\geq0$ by
\[Z^k(t) = \sum_{u_1,\ldots,u_k\in N(t)} \ind_{\{\zeta(u_i,t)>0 \hsl \forall i=1,\ldots,k\}} \prod_{v\in\skel_{u_1,\ldots,u_k}(t)} \frac{\zeta(X_v,\tau_v(t))}{\zeta(X_v,\sigma_v(t))} E_{u_1,\ldots,u_k}(v,t).\]
\end{defn}

We remark here that $Z^k$ and $\zeta(\xi^j,\cdot)$ are, in fact, simply the projections of $\tilde{\zeta}^k$ onto the relevant filtrations:
\[Z^k(t) = \P^k_x[\tilde{\zeta}^k(t)|\Fg_t] \hs\hs\hbox{ and }\hs\hs \zeta(\xi^j,t) = \P^k_x[\tilde{\zeta}^k(t)|\Gg^{\{j\}}_t].\]

\begin{lem}
The process $\tilde\zeta^k(t)$, $t\geq0$ is a martingale with respect to the filtrations $\Gt^k_t$ and $\Ft^k_t$.
\end{lem}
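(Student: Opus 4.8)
The plan is to establish the martingale property for $\tilde\zeta^k(t)$ by decomposing it along the structure of the skeleton and using the facts that $\zeta$ is a single-particle martingale and that the branching events are governed by independent Poisson clocks. First I would set up the candidate martingale in a form amenable to conditioning: write $\tilde\zeta^k(t)$ as the product, over the $k$ spines and the tree they generate, of three ingredients --- the single-particle martingale ratios $\zeta(X_v,\tau_v(t))/\zeta(X_v,\sigma_v(t))$ along each spine edge, the compensator terms $E^k(v,t)$, and the combinatorial factors $A_v^{D^k(v)}$ for internal skeleton nodes. The key point is that, conditionally on the skeleton structure (i.e.\ on $\Gt^k_t$ stripped of positions), these three pieces interact cleanly: the $A_v^{D^k(v)}$ factors exactly cancel the probability (from the Remark) that the spines chose that particular line of descent, the $E^k(v,t)$ factors are the exponential compensators for the branching clocks, and the $\zeta$-ratios telescope along each spine so that on any spine alive at time $t$ they collapse to $\zeta(\xi^i,t)$ (using $\zeta(\xi^i,0)=1$).

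The main technical step is to verify the martingale property by checking it is locally a martingale at two kinds of events: (i) in between branching events of skeleton particles, and (ii) across such a branching event. For (i), on an interval where the skeleton structure is frozen, $\tilde\zeta^k(t)$ is a product over the current spine particles of $\zeta(X_v,t)$-type factors times $\exp(\int (m^{D(v)}(X_v(s))-1)R(X_v(s))ds$ with a minus sign) --- wait, these are the $E^k$ terms, which are deterministic given the paths, so differentiating (or applying It\^o to) the product reduces to the statement that each $\zeta(\xi^i,\cdot)$ is a martingale for the changed spine motion under $\Q$... but here we are under $\P^k_x$, so in fact the relevant fact is just that $\zeta(Y,t)$ is a $\mathcal C$-martingale, and the compensator $E^k$ is irrelevant between jumps since $D^k(v)$ and $R$-integrals only contribute at the endpoints of edges. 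So between branching events $\tilde\zeta^k$ evolves purely through the $\zeta$-martingales of the spine particles, which are conditionally independent given their birth data, giving a product of martingales, hence a martingale. For (ii), at a branching event of a skeleton particle $v$ carrying $j=D^k(v)$ spines at position $y$: the particle splits into $A_v$ children with law $\mu_y$, and each of the $j$ spines picks a child uniformly; I would compute the conditional expectation of the jump in $\tilde\zeta^k$ and show it equals zero. Here the $A_v^{D^k(v)}$ factor is born, contributing $A_v^j$; the expected number of ways the spines distribute, summed against $\mu_y(a)$ and the $\zeta$-ratio continuity (the single-particle martingale does not jump at the fission time if the offspring start where the parent died), produces $\sum_a \mu_y(a) a^{-j}\cdot a^j = \sum_a\mu_y(a)=1$, balanced precisely against the increment of the compensator $E^k(v,\cdot)$ at that instant, which carries the rate $m^{D(v)}(y)R(y)$. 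Making this balance rigorous is the heart of the argument.

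A clean way to organize all of this is to use the projection identities already stated in the excerpt: since $Z^k(t) = \P^k_x[\tilde\zeta^k(t)\mid\Fg_t]$ and $\zeta(\xi^j,t) = \P^k_x[\tilde\zeta^k(t)\mid\Gg^{\{j\}}_t]$, and since the filtrations $\Gt^k_t$ and $\Ft^k_t$ are generated (up to the motion) by the branching/spine-choice structure, one can build $\tilde\zeta^k$ up spine-by-spine. Concretely, I would induct on $k$: for $k=1$, $\tilde\zeta^1$ is the classical single-spine change-of-measure martingale of Lyons et al.\ (modulo the killing indicator), for which the martingale property is standard --- it is where the ``size-bias $\mu^1_y$, accelerated rate $m^1(y)R(y)$, and tilted motion'' decomposition comes from. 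For the inductive step, condition on the path and branching structure of the first $k-1$ spines: the $k$th spine then follows an additional line of descent through this frozen skeleton, and the extra factors introduced ($\zeta$-ratios along new edges, new $A_v$-powers where the $k$th spine shares nodes with earlier spines, and the change in $D^k(v)$ hence in $E^k$) again assemble into a conditional martingale by the same one-spine argument applied along the new edges, with the $m^{D}(y)$ moments appearing because the $k$th spine may split off from a node already carrying $D-1$ spines.

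The step I expect to be the main obstacle is (ii) above: getting the bookkeeping exactly right at a branching event where \emph{several} spines sit on the same particle and some subset of them stay together while others separate. One must check that, summing over all the ways the $j$ spines can be allocated to the $a$ offspring and over $a\sim\mu_y$, and accounting simultaneously for the jump in every $D^k$-value of the children (which changes the future $E^k$ compensators), the net conditional expectation of the increment vanishes. The combinatorial identity that makes this work is that $\sum$ over set-partition-like allocations of the uniform choices, weighted by $a^{-j}$, times $a^j$ from the $A_v$-factor, collapses against the $n$th-moment normalizations $m^n(y)$ hidden in the definition of $E^k$; verifying this cleanly --- rather than drowning in indices --- is where care is needed, and it is precisely the place where the ``many-to-few'' combinatorics (set partitions recording which spines split where) enter.
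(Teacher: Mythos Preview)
Your proposal circles the right ideas --- compensation of the $A_v^{D^k(v)}$ factors against the exponential terms $E^k(v,t)$, and induction on $k$ --- but it is considerably more complicated than necessary, and your computation in step (ii) is not correct as written.

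The paper's argument is almost a one-liner once you isolate the following fact: along any single line of descent $\chi$, the births form a Cox process with intensity $R(\chi_s)$, so
\[
\P_x\!\left[\prod_{v<\chi_t} A_v^{\,j} \;\middle|\; \chi_s,\, s\le t\right]
=\exp\!\left(\int_0^t \bigl(m^j(\chi_s)-1\bigr)R(\chi_s)\,ds\right).
\]
This single identity says precisely that, conditionally on the spine path, the product of $A_v^{\,j}$ over births is exactly the reciprocal of the compensator $E^k(v,t)$ along that branch; combined with the fact that $\zeta(\chi,\cdot)$ is a martingale in the path, this gives the $\Gt^k$-martingale property immediately along any single branch carrying $j$ spines. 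The induction on $k$ in the paper is then organised not by \emph{adding spines one at a time} (your scheme), but by stopping at the \emph{first split time} of the skeleton: before it, a single particle carries all $k$ spines and the Cox identity with $j=k$ applies; after it, each sub-skeleton carries at most $k-1$ spines and the inductive hypothesis applies to each piece independently. No set-partition combinatorics over spine allocations are ever needed.

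Where your step (ii) goes wrong: you write that at a branching event the contribution is $\sum_a \mu_y(a)\,a^{-j}\cdot a^{j}=1$, ``balanced against'' the compensator. There is no $a^{-j}$ in the jump of $\tilde\zeta^k$. At a fission of a particle carrying $j$ spines, the value of $\tilde\zeta^k$ is multiplied by $A_v^{\,j}$ (and nothing else changes instantaneously: the children's compensators start at $1$ and the $\zeta$-ratios are continuous). The expected multiplicative jump is therefore $m^j(y)$, not $1$, and the compensator $E^k$ contributes drift $-(m^j(y)-1)R(y)$ between jumps; these combine with the jump rate $R(y)$ to give a local martingale in the usual way. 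The allocation of the $j$ spines among the $a$ children does \emph{not} affect the instantaneous value of $\tilde\zeta^k$ --- it only determines the future skeleton structure --- so your anticipated ``main obstacle'' of summing over allocations simply does not arise. Once you replace your (ii) by the Cox-process identity above and reorganise the induction via the first split time, the proof collapses to a few lines.
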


\begin{proof}
Let $\chi = (v_1,v_2,\ldots)$ be a single line of descent (so in particular $v_1 < v_2 < \ldots$), with $\chi_t$ representing the position of the unique $v_i$ that is alive at time $t$. The births along $\chi$ form a Cox process driven by $\chi_t$ with rate function $R$. Thus for any $j\geq0$,
\[\P_x\bigg[\bigg.\prod_{v<\chi_t} A_v^j\bigg|\chi_s, s\in[0,t]\bigg] = \exp\left(\int_0^t \alpha_j(\chi_s)ds\right).\]
We work by induction on $k$. The case $k=1$ is just the single spine case, and is proved by conditioning first on $\Gg^1_t$, since the births along the spine form a Cox process driven by $\xi^1_t$ with rate function $R$. Then, by induction, it is enough to consider the process up to the first split time of the skeleton, since after this time no particle carries more than $k-1$ spines. But up to the first split we have a single particle carrying $k$ spines, so the same argument holds as for the single spine case: the births again form a Cox process driven by $\xi^1_t$ with rate function $R$.
\end{proof}

\begin{defn}\label{Qdef}
We define the measure $\Q^k_x$ by setting
\[\left.\frac{d\Q^k_x}{d\P^k_x}\right|_{\F^k_t} = \tilde\zeta^k(t).\]
\end{defn}

\noindent
The proof that $\Q^k_x$ behaves as claimed in Section \ref{pq_description} is identical to the proof for one spine given by Chauvin and Rouault \cite{chauvin_rouault:kpp_supercrit_bbm_subcrit_speed}, applied to each branch of the skeleton independently.

\section{Proof of the many-to-few lemma}\label{proof_sec}
We first calculate the probability that particles $(u_1,\ldots, u_k)$ make up the skeleton at time $t$.

\begin{lem}[Gibbs-Boltzmann weights for $\Q^k$]
For any $u_1, \ldots u_k \in N(t)\cup\{\Delta\}$,
\[\Q^k_x(\psi^1_t = u_1, \ldots, \psi^k_t = u_k | \Fg_t) = \frac{1}{Z(t)} \prod_{v\in\skel_{u_1,\ldots,u_k}(t)}\frac{\zeta(X_v,\tau_v(t))}{\zeta(X_v,\sigma_v(t))} E_{u_1,\ldots,u_k}(v,t).\]
\end{lem}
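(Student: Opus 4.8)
The plan is to compute the conditional probability directly using the definition of $\Q^k_x$ via its Radon--Nikodym derivative with respect to $\P^k_x$, and then project onto $\Fg_t$. First I would recall that for any $\Ft^k_t$-measurable event $E$ and any $\Fg_t$-measurable bounded $G$, we have $\Q^k_x[G \ind_E] = \P^k_x[\tilde\zeta^k(t) G \ind_E]$, since $d\Q^k_x/d\P^k_x|_{\Ft^k_t} = \tilde\zeta^k(t)$. Taking $E = \{\xi^1_t = u_1, \ldots, \xi^k_t = u_k\}$ and using the tower property over $\Fg_t$ (under $\P^k_x$), the key identity $Z^k(t) = \P^k_x[\tilde\zeta^k(t)|\Fg_t]$ tells us that $\Q^k_x[G] = \P^k_x[Z^k(t) G]$, so $Z^k(t)$ is precisely the density $d\Q^k_x/d\P^k_x|_{\Fg_t}$. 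Hence $\Q^k_x(E \mid \Fg_t) = \P^k_x[\tilde\zeta^k(t)\ind_E \mid \Fg_t]/Z^k(t)$, and $Z(t)$ in the statement should be read as $Z^k(t)$.

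Next I would evaluate the numerator $\P^k_x[\tilde\zeta^k(t)\ind_{\{\xi^1_t=u_1,\ldots,\xi^k_t=u_k\}} \mid \Fg_t]$. On the event $\{\xi^1_t = u_1, \ldots, \xi^k_t = u_k\}$, the random skeleton $\skel^k(t)$ equals the deterministic (given $\Fg_t$) set $\skel_{u_1,\ldots,u_k}(t)$, the multiplicities $D^k(v)$ equal $D_{u_1,\ldots,u_k}(v)$, and $E^k(v,t) = E_{u_1,\ldots,u_k}(v,t)$; so on this event
\[\tilde\zeta^k(t) = \ind_{\{\zeta(u_i,t)>0 \,\forall i\}}\prod_{v\in\skel_{u_1,\ldots,u_k}(t)}\left(\frac{\zeta(X_v,\tau_v(t))}{\zeta(X_v,\sigma_v(t))}E_{u_1,\ldots,u_k}(v,t)\right)\prod_{v\in\skel_{u_1,\ldots,u_k}(t)\setminus N(t)}A_v^{D_{u_1,\ldots,u_k}(v)}.\]
All of these factors are $\Fg_t$-measurable, so they pull out of the conditional expectation, leaving $\P^k_x(\xi^1_t = u_1,\ldots,\xi^k_t = u_k \mid \Fg_t)$, which by the Remark in Section \ref{measure_change} equals $\prod_{v\in\skel_{u_1,\ldots,u_k}(t)\setminus N(t)}A_v^{-D_{u_1,\ldots,u_k}(v)}$. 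This cancels exactly the product $\prod_{v\in\skel_{u_1,\ldots,u_k}(t)\setminus N(t)}A_v^{D_{u_1,\ldots,u_k}(v)}$, so the numerator collapses to $\ind_{\{\zeta(u_i,t)>0\,\forall i\}}\prod_{v\in\skel_{u_1,\ldots,u_k}(t)}\frac{\zeta(X_v,\tau_v(t))}{\zeta(X_v,\sigma_v(t))}E_{u_1,\ldots,u_k}(v,t)$. Dividing by $Z^k(t)$ gives the claimed formula (noting that the surviving indicator is subsumed into the product via the convention $\zeta(X_v,\tau_v(t))=0$ when $A_v=0$, or can simply be left implicit as in the statement).

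The main obstacle, and the one place requiring genuine care rather than bookkeeping, is the identity $\P^k_x[\tilde\zeta^k(t)\mid\Fg_t] = Z^k(t)$: one must verify that summing the per-skeleton contribution over all choices $(u_1,\ldots,u_k)\in (N(t)\cup\{\Delta\})^k$, weighted by the $\P^k_x$-conditional probability of that skeleton, reproduces $Z^k(t)$ — but this is asserted in the excerpt as a remark and so may be taken as given. The only other subtlety is checking measurability: that on the fixed $\Fg_t$-atom, each quantity appearing in $\tilde\zeta^k(t)$ really is $\Fg_t$-measurable once the identities of $u_1,\ldots,u_k$ are fixed (the lifetimes, positions, and offspring counts of the strict ancestors of the $u_i$ are all $\Fg_t$-measurable since those particles have died by time $t$). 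Everything else is algebraic cancellation.
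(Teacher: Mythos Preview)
Your argument is correct and is essentially identical to the paper's own proof: both use the abstract Bayes formula $\Q^k_x(E\mid\Fg_t)=\P^k_x[\tilde\zeta^k(t)\ind_E\mid\Fg_t]/\P^k_x[\tilde\zeta^k(t)\mid\Fg_t]$, identify the denominator as $Z^k(t)$, freeze the skeleton quantities on the event $\{\xi^i_t=u_i\}$ as $\Fg_t$-measurable, and cancel the $A_v^{D_{u_1,\ldots,u_k}(v)}$ factor against the Remark's expression for $\P^k_x(\xi^1_t=u_1,\ldots,\xi^k_t=u_k\mid\Fg_t)$. Your additional remarks on measurability and the handling of the indicator are sound but go slightly beyond what the paper makes explicit.
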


\begin{proof}
By the fact that $\P^k_x[\tilde\zeta(t)|\Fg_t] = Z(t)$ and standard properties of conditional expectation,
\begin{align*}
\Q^k_x(\psi^1_t = u_1, \ldots, \psi^k_t = u_k | \Fg_t)&= \frac{\P^k_x[\tilde\zeta(t)\ind_{\{\psi^1_t=u_1,\ldots,\psi^k_t=u_k\}}|\Fg_t]}{\P^k_x[\tilde\zeta(t)|\Fg_t]}\\
&= \frac{1}{Z(t)}\Bigg(\prod_{v\in\skel_{u_1,\ldots,u_k}(t)} \frac{\zeta(X_v,\tau_v(t))}{\zeta(X_v,\sigma_v(t))}E_{u_1,\ldots,u_k}(v,t) \Bigg)\\
&\hspace{2mm}\cdot\Bigg(\prod_{v\in\skel_{u_1,\ldots,u_k}(t)\setminus N(t)}\hspace{-2mm}A_v^{D_{u_1,\ldots,u_k}(v)}\Bigg)\P^k_x(\psi^1_t = u_1,\ldots,\psi^k_t=u_k|\Fg_t)\\
&= \frac{1}{Z(t)}\prod_{v\in\skel_{u_1,\ldots,u_k}(t)} \frac{\zeta(X_v,\tau_v(t))}{\zeta(X_v,\sigma_v(t))}E_{u_1,\ldots,u_k}(v,t).\qedhere
\end{align*}
\end{proof}

The proof of the many-to-few lemma is now straightforward.

\begin{proof}[Proof of Lemma \ref{many_to_few}]
We begin with the right-hand side.
\begin{align*}
&\Q^k_x\left[Y\prod_{v\in\skel(t)}\frac{\zeta(X_v,\sigma_v(t))}{\zeta(X_v,\tau_v(t))}\frac{1}{E(v,t)}\right]\\
&=\Q^k_x\Bigg[\sum_{u_1,\ldots,u_k\in N(t)\cup\{\Delta\}}\hspace{-7mm}Y(u_1,\ldots,u_k)\prod_{v\in\skel_{u_1,\ldots,u_k}(t)}\frac{\zeta(X_v,\sigma_v(t))}{\zeta(X_v,\tau_v(t))}\frac{1}{E_{u_1,\ldots,u_k}(v,t)}\ind_{\{\psi^1_t = u_1, \ldots, \psi^k_t = u_k\}}\Bigg]\\
&=\Q^k_x\Bigg[\sum_{u_1,\ldots,u_k\in N(t)\cup\{\Delta\}}\hspace{-7mm}Y(u_1,\ldots,u_k)\prod_{v\in\skel_{u_1,\ldots,u_k}(t)}\frac{\zeta(X_v,\sigma_v(t))}{\zeta(X_v,\tau_v(t))}\frac{\Q^k_x(\psi^1_t = u_1, \ldots, \psi^k_t = u_k|\Fg_t)}{E_{u_1,\ldots,u_k}(v,t)}\Bigg]\\
&= \Q^k_x\Bigg[\frac{1}{Z^k(t)}\sum_{u_1,\ldots,u_k\in N(t)} Y(u_1,\ldots,u_k)\Bigg]\\
&= \Q^k_x\Bigg[\frac{1}{Z^k(t)}\sum_{u_1,\ldots,u_k\in N(t)} Y(u_1,\ldots,u_k)\ind_{\{\zeta(u_i, t)>0 \hsl \forall i=1,\ldots,k\}}\Bigg]\\
&= \P^k_x\Bigg[\sum_{u_1,\ldots,u_k\in N(t)} Y(u_1,\ldots,u_k)\ind_{\{\zeta(u_i, t)>0 \hsl \forall i=1,\ldots,k\}}\Bigg]
\end{align*}
where for the last step we used the fact that $\left.\frac{d\Q^k_x}{d\P^k_x}\right|_{\Fg_t} = Z^k(t)$.
\end{proof}

\section{Many-to-two at two different times}\label{diff_times_sec}

Sometimes we might like to calculate things like
\[\P_x[\#\{v\in N(s),w\in N(t) : X_{v}(s)\geq x_s, \hs X_{w}(t) \geq x_t\}]\]
where $s<t$ and $x, x_s, x_t\in \R$. In this case we might expect an expression involving one spine at time $s$ and the other at time $t$. A calculation using the many-to-few formula confirms this, and indeed similar statements for $k$ particles at $k$ different times. To save ourselves from having to carry around too much notation, we restrict to the case $k=2$.

One complication is as follows. When working with one time $t$, we asked that our random variable be $\F_t^2$-measurable. Now that we are handling two different times $s<t$, we need something more subtle: $Y$ should depend only on some part of the tree after time $s$. The following definition makes this precise.

Fix $s<t$. Suppose that we have an $\F_t^2$-measurable random variable $Y$. We say that $Y$ {\em respects the tree at time $s$} if $Y$ can be written in the form
\[Y = \sum_{v\in N(s)}\sum_{w\in N(t)} Y(v,w)\ind_{\{\psi^1_s = v, \hsl \psi^2_t = w\}},\]
where for each $v\in N(s)$ and $w\in N(t)$, $Y(v,w)$ is $\F_t$-measurable and, given $\F_s^2$, on the event $\{v\not\leq w\}$, $Y(v,w)$ is independent of the subtree generated by $v$ (that is, the labels, positions, lifetimes, and number of children of $v$ and its descendants).

For example, for any measurable functions $f,g:\R\to\R$, the random variable $f(\xi^1_t)g(\xi^2_s)$ respects the tree at time $s$.

\begin{lem}[Many-to-two at two different times]
Fix $s<t$. Suppose that we have an $\F_t^2$-measurable random variable $Y$ that respects the tree at time $s$. Then
\begin{multline}\label{mtt2}
\P_x\Big[\sum_{v\in N(s)}\sum_{w\in N(t)} Y(v,w) \ind_{\{\zeta(v,s)>0,\hsl \zeta(w,t)>0\}}\Big]\\
= \Q^2_x \left[ Y \frac{\zeta(\xi^1,T(1,2)\wedge s)}{\zeta(\xi^1,s)\zeta(\xi^2,t)} e^{\int_0^{T(1,2)\wedge s} \alpha_2(\xi^1_u) du + \int_{T(1,2)\wedge s}^s \alpha_1(\xi^1_u) du + \int_{T(1,2)\wedge s}^t \alpha_1(\xi^2_u)du}\right].
\end{multline}
\end{lem}

\begin{proof}
For a particle $v\in N(t)$, let $v_s$ be the ancestor of $v$ that was alive at time $s$. Write $T$ as shorthand for $T(1,2)$, the split time for the two spines. For $v,w\in N(t)$ let $S(v,w)$ be the death time of the most recent common ancestor of $v$ and $w$ (in particular if $v=w$ then $S(v,w)=\tau_v >t$). Also set
\begin{multline}\label{tildeYdecomp}
\tilde Y(v,w) = \ind_{\{S(v,w)\leq s\}}\ind_{\{\zeta(v,s)>0,\zeta(w,t)>0\}}Y(v_s,w)\ind_{\{v=v_s\}}e^{\int_s^t R(X_v(u))du}\\
+ \ind_{\{S(v,w)>t\}}\ind_{\{\zeta(w,t)>0\}}Y(v_s,w).
\end{multline}
We will prove the result by showing that both sides of (\ref{mtt2}) are equal to $\P_x[\sum_{v,w\in N(t)} \tilde Y(v,w)]$.

From the definition of $\tilde Y(v,w)$,
\begin{multline}
\P_x\Bigg[\sum_{v,w\in N(t)} \tilde Y(v,w)\Bigg] = \P_x\Bigg[\sum_{v\in N(s)}\sum_{\substack{w\in N(t):\\ v\not\leq w}} \ind_{\{\zeta(v,s)>0,\zeta(w,t)>0\}} Y(v,w) \ind_{\{\tau_{v} > t\}}e^{\int_s^t R(X_{v}(u))du}\Bigg]\\
+ \P_x\Bigg[\sum_{v\in N(s)} \sum_{\substack{w\in N(t) :\\ v\leq w}} \ind_{\{\zeta(w,t)>0\}}Y(v,w)\Bigg].
\end{multline}
By the fact that $Y$ respects the tree at time $s$, given $\F_s^2$, if $v\in N(s)$, $w\in N(t)$ and $v\not\leq w$, then $Y(v,w)$ is independent of the subtree generated by $v$. Also $\P(\tau_v>t|\F_s^2) = e^{-\int_s^t R(X_{v}(u))du}$, so
\begin{align*}
\P_x\Bigg[\sum_{v,w\in N(t)} \tilde Y(v,w)\Bigg] &= \P_x\Bigg[\sum_{v\in N(s)}\sum_{\substack{w\in N(t):\\ v\not\leq w}} \ind_{\{\zeta(v,s)>0,\zeta(w,t)>0\}} Y(v,w) \Bigg]\\
&\hspace{40mm} + \P_x\Bigg[\sum_{v\in N(s)} \sum_{\substack{ w\in N(t) :\\ v\leq w}} \ind_{\{\zeta(w,t)>0\}}Y(v,w)\Bigg]\\
&= \P_x\Bigg[\sum_{v\in N(s)}\sum_{w\in N(t)} \ind_{\{\zeta(v,s)>0,\zeta(w,t)>0\}} Y(v,w)\Bigg].
\end{align*}
We have shown that the left-hand side of (\ref{mtt2}) is equal to $\P_x[\sum_{v,w\in N(t)} \tilde Y(v,w)]$. We essentially want to apply the standard many-to-two lemma to this quantity, but it turns out that this does not quite give us the required expression and thus we need to rework the proof to adapt it to $\tilde Y(v,w)$.

We return to the definition (\ref{tildeYdecomp}) of $\tilde Y(v,w)$. We have that
\begin{multline*}
\P_x\bigg[\sum_{v,w\in N(t)}\tilde Y(v,w)\bigg] = \P_x\bigg[\sum_{v,w\in N(t)}\ind_{\{S(v,w)\leq s\}}\ind_{\{\zeta(v,s)>0,\zeta(w,t)>0\}}Y(v_s,w)\ind_{\{v=v_s\}}e^{\int_s^t R(X_v(u))du}\bigg]\\
+ \P_x\bigg[\sum_{v,w\in N(t)}\ind_{\{S(v,w)>t\}}\ind_{\{\zeta(v,t)>0\}} Y(v_s,w)\bigg].
\end{multline*}
First note that
\begin{align*}
\P_x\bigg[\sum_{v,w\in N(t)}\ind_{\{S(v,w)>t\}}\ind_{\{\zeta(v,t)>0\}} Y(v_s,w)\bigg] &= \Q^2_x\left[Y\ind_{\{T>t\}}\frac{1}{\zeta(\xi^2,t)} e^{\int_0^t \alpha_2(\xi^2_u)du}\right]\\
&= \Q^2_x\left[Y\frac{1}{\zeta(\xi^2,t)}e^{\int_0^t \alpha_2(\xi^2_u)du}\ind_{\{T>s\}}\Q^2_x(T>t | \Gg^2_t, \Fg^2_s)\right]\\
&= \Q^2_x\left[Y\frac{1}{\zeta(\xi^2,t)}e^{\int_0^s \alpha_2(\xi^2_u)du + \int_s^t \alpha_1(\xi^2_u)du}\ind_{\{T>s\}}\right].
\end{align*}
We will also show that
\begin{multline}\label{suffice}
\P_x\bigg[\sum_{v,w\in N(t)}\ind_{\{S(v,w)\leq s\}}\ind_{\{\zeta(v,s)>0,\zeta(w,t)>0\}}Y(v_s,w)\ind_{\{v=v_s\}}e^{\int_s^t R(X_v(u))du}\bigg]\\
= \Q^2_x\left[Y\ind_{\{T\leq s\}} \frac{\zeta(\xi^1,T)}{\zeta(\xi^1,s)\zeta(\xi^2,t)}e^{\int_0^T \alpha_2(\xi^1_u)du + \int_T^s \alpha_1(\xi^1_u)du + \int_T^t \alpha_1(\xi^2_u)du}\right].
\end{multline}
Combining these two equalities, we get that 
\[\P_x\bigg[\sum_{v,w\in N(t)}\tilde Y(v,w)\bigg] = \Q^2_x\left[Y \frac{\zeta(\xi^1,T\wedge s)}{\zeta(\xi^1,s)\zeta(\xi^2,t)}e^{\int_0^{T\wedge s} \alpha_2(\xi^1_u)du + \int_{T\wedge s}^s \alpha_1(\xi^1_u)du + \int_{T\wedge s}^t \alpha_1(\xi^2_u)du}\right]\]
which will be enough to complete the proof.

It remains to show (\ref{suffice}). By the definition (\ref{Pdef}) of $\P^2_x$,
\begin{multline*}
\P_x\bigg[\sum_{v,w\in N(t)}\ind_{\{S(v,w)\leq s\}}\ind_{\{\zeta(v,s)>0,\zeta(w,t)>0\}}Y(v_s,w)\ind_{\{v=v_s\}}e^{\int_s^t R(X_v(u))du}\bigg]\\
=\P^2_x\Bigg[\ind_{\{T\leq s\}}\ind_{\{\zeta(\xi^1,s)>0,\zeta(\xi^2,t)>0\}}Y \ind_{\{\psi^1_t=\psi^1_s\}}e^{\int_s^t R(\xi^1_u) du}\\
\cdot\bigg(\prod_{v\leq \psi^1_{T}}A_v^2\bigg)\bigg(\prod_{\psi^1_{T}<v< \psi^1_t}A_v\bigg)\bigg(\prod_{\psi^2_{T}< v < \psi^2_t}A_v\bigg)\Bigg].
\end{multline*}
On the event $\{\psi^1_t=\psi^1_s\}$, the second product above can be restricted to $v < \psi^1_s$ without changing anything. Then, using the fact that $Y$ respects the tree at time $s$, the above is
\[\P^2_x\Bigg[\ind_{\{T\leq s\}}\ind_{\{\zeta(\xi^1,s)>0,\zeta(\xi^2,t)>0\}}Y \bigg(\prod_{v\leq \psi^1_{T}}A_v^2\bigg)\bigg(\prod_{\psi^1_{T}<v< \psi^1_s}A_v\bigg)\bigg(\prod_{\psi^2_{T}< v < \psi^2_t}A_v\bigg)\Bigg].\]
Using again the fact that $Y$ respects the tree at time $s$, we see that given $\F_s^2$, on the event $\{T\leq s\}$,
\[\frac{\zeta(\xi^1,r)}{\zeta(\xi^1,s)}e^{-\int_s^r \alpha_1(\xi^1_u)du}\ind_{\{\zeta(\xi^1,r)>0\}}\prod_{\psi^1_s < v < \psi^1_r} A_v, \hs r\geq s\]
is a martingale that is independent of $Y \prod_{\psi^2_s <v< \psi^2_t}A_v$. Putting this together with the above, we have that
\begin{multline}\label{PtoP2}
\P_x\bigg[\sum_{v,w\in N(t)}\ind_{\{S(v,w)\leq s\}}\ind_{\{\zeta(v,s)>0,\zeta(w,t)>0\}}Y(v_s,w)\ind_{\{v=v_s\}}e^{\int_s^t R(X_v(u))du}\bigg]\\
= \P^2_x\Bigg[\ind_{\{T\leq s\}}\ind_{\{\zeta(\xi^1,t)>0,\zeta(\xi^2,t)>0\}}Y\frac{\zeta(\xi^1,t)}{\zeta(\xi^1,s)}e^{-\int_s^t \alpha_1(\xi^1_u)du} \bigg(\prod_{v\leq \psi^1_{T}}A_v^2\bigg)\bigg(\prod_{\psi^1_{T}<v< \psi^1_t}A_v\bigg)\bigg(\prod_{\psi^2_{T}< v < \psi^2_t}A_v\bigg)\Bigg].
\end{multline}

Recall now that on the event $\{T\leq s\}$, since $s\leq t$, we have that $T\leq t$ and therefore by Definitions \ref{zetatildedef} and \ref{Qdef},
\begin{multline*}
\left.\frac{d\Q^2_x}{d\P^2_x}\right|_{\F^2_t} = \frac{\zeta(\xi^1,t)\zeta(\xi^2,t)}{\zeta(\xi^1,T)}\ind_{\{\zeta(\xi^1,t)>0,\zeta(\xi^2,t)>0\}}e^{-\int_0^{T} \alpha_2(\xi^1_u) du - \int_{T}^t \alpha_1(\xi^1_u) du - \int_{T}^t \alpha_1(\xi^2_u)du}\\
\cdot\bigg(\prod_{v\leq \psi^1_{T}}A_v^2\bigg)\bigg(\prod_{\psi^1_{T}<v< \psi^1_t}A_v\bigg)\bigg(\prod_{\psi^2_{T}< v < \psi^2_t}A_v\bigg).
\end{multline*}
Applying this to (\ref{PtoP2}), we get that
\begin{multline*}
\P_x\bigg[\sum_{v,w\in N(t)}\ind_{\{S(v,w)\leq s\}}\ind_{\{\zeta(v,s)>0,\zeta(w,t)>0\}}Y(v_s,w)\ind_{\{v=v_s\}}e^{\int_s^t R(X_v(u))du}\bigg]\\
=\Q^2_x\left[Y\ind_{\{T\leq s\}} \frac{\zeta(\xi^1,T)}{\zeta(\xi^1,s)\zeta(\xi^2,t)}e^{\int_0^T \alpha_2(\xi^1_u)du + \int_T^s \alpha_1(\xi^1_u)du + \int_T^t \alpha_1(\xi^2_u)du}\right].
\end{multline*}
This establishes (\ref{suffice}) and completes the proof.
\end{proof}

\section{Many-to-few in discrete time}\label{discrete_sec}
We state here a version of the many-to-few lemma for discrete-time processes. We shall not prove it, as it is very similar to the continuous-time version studied above.

We begin, under a probability measure $\P_x$, with one particle in generation $0$ located at $x\in J$. Any particle at position $y$ has children whose number and positions are decided according to a finite point process $\mathcal D_y$ on $J$. The children of particles in generation $n$ make up generation $n+1$. We define $G(n)$ to be the set of all particles in generation $n$, $N(n) = \#G(n)$ to be the number of such particles, and $X_v$ to be the position of particle $v$. We set $m_j(y) = \P_y[N(1)^j]$ to be the $j$th moment of the number of particles created by the point process $\mathcal D_y$. Write $|v|$ to be the generation of particle $v$. For a particle $v$ in generation $n\geq1$, let $p(v)$ be its parent in generation $n-1$.

\subsection{The measure $\Q^k_x$ and the main result in discrete time}
We define a new measure $\P^k_x$ which has $k$ distinguished lines of descent $\psi^1,\ldots,\psi^k$ just as in the continuous-time case, which we call spines. Under $\P^k_x$, if a particle carrying $j$ marks (i.e.\ the particle is part of $j$ spines) in generation $n$ has $l$ children in generation $n+1$, then each of its $j$ marks chooses a particle to follow in generation $n+1$ uniformly at random from the $l$ children. We let $\xi^i_n$ be the position of the $i$th spine in generation $n$ and define $\skel(n)$ to be the set of all particles of generation at most $n$ which are part of at least one spine. Let $D_v$ be the number of marks carried by particle $v$.

For any $i$, we note that $X_{\xi^i_0}, X_{\xi^i_1}, X_{\xi^i_2}, \ldots$ is a Markov chain with some generator $\mathcal M'$ not depending on $i$. Suppose that $\zeta(X, n)$, $n\geq0$ is a functional of a process $(X_n, n\geq0)$ such that if $(X_n,n\geq0)$ is a Markov process with generator $\mathcal M'$ then $\zeta(X,n)$, $n\geq0$ is a martingale with respect to the natural filtration of $(X_n,n\geq0)$.

Under $\Q^k_x$ particles behave as follows:
\begin{itemize}
\item A particle at position $y$ carrying $j$ marks has children whose number and positions are decided by a point process such that:
\vspace{-1.5mm}
		\begin{itemize}
		\item for each $j$ and $l\geq0$, $\Q^j_y(N(1)=l) = l^j\P_y(N(1)=l)/\P_y[N(1)^j]$;
		\item for each $i$, the sequence $X_{\xi^i_0}, X_{\xi^i_1}, X_{\xi^i_2}, \ldots$ is a Markov chain distributed as if under the changed measure $Q^i_x|_{\Gg^{\{i\}}_n} := \zeta(\xi^i,n)\P^k_x|_{\Gg^{\{i\}}_n}$.
		\end{itemize}
\vspace{-2.5mm}
\item Given that $a$ particles $v_1,\ldots,v_a$ are born at such a branching event, the $j$ spines each choose a particle to follow independently and uniformly at random.
\vspace{-1mm}
\item Particles not in the skeleton (those carrying no marks) have children according to the point process $\mathcal D_y$ when at position $y$, just as under $\P$.
\end{itemize}
In other words, under $\Q^k_x$ spine particles move as if weighted by the martingale $\zeta$, they breed at a modified rate, and they give birth to size-biased numbers of children. The birth rate and number of children depend on how many marks the spine particle is carrying, whereas the motion does not.

\begin{lem}[Many-to-few in discrete time]\label{discrete_many_to_few}
For any $k\geq1$ and $\Fg^k_n$-measurable $Y$ such that
\[Y = \sum_{v_1,\ldots, v_k\in G(n)\cup\{\Delta\}} Y(v_1,\ldots,v_k) \ind_{\{\psi^1_n = v_1, \ldots, \psi^k_n = v_k\}}\]
we have
\begin{multline*}
\P_x\Bigg[\sum_{v_1,\ldots,v_k \in G(n)} Y(v_1,\ldots, v_k)\ind_{\{\zeta(v_i,n)>0\hsl\forall i=1,\ldots,k\}}\Bigg]\\
= \Q^k_x\Bigg[ Y \prod_{v\in \skel(n)\setminus\{\emptyset\}} \frac{\zeta(p(v),|v|-1)}{\zeta(v,|v|)}m_{D_{p(v)}}(X_{p(v)})\Bigg].
\end{multline*}
\end{lem}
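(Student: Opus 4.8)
The plan is to mimic, step for step, the proof of Lemma~\ref{many_to_few}, replacing the Cox-process computations along lines of descent by their one-step discrete analogues. First I would make the discrete $k$-spine set-up precise: a measure $\P^k_x$ on marked trees carrying $k$ spines under which particles branch exactly as under $\P_x$ and, at each branching event, each mark carried by the branching particle picks one of the children uniformly and independently, so that $\P^k_x(u\in\xi^j)=\prod_{w<u}A_w^{-1}$ and, more generally, $\P^k_x(\xi^1_n=u_1,\ldots,\xi^k_n=u_k\mid\Fg_n)=\prod_{w\in\skel_{u_1,\ldots,u_k}(n),\,|w|<n}A_w^{-D_{u_1,\ldots,u_k}(w)}$ for the analogously-defined $\skel_{u_1,\ldots,u_k}(n)$. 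I would record the decomposition $Y=\sum_{v_1,\ldots,v_k}Y(v_1,\ldots,v_k)\ind_{\{\xi^1_n=v_1,\ldots,\xi^k_n=v_k\}}$ with $\Fg_n$-measurable coefficients, and note the discrete single-particle identity that replaces the Cox computation: along any line of descent $v_0,v_1,\ldots$ the offspring numbers satisfy $\P_x[\prod_{m<n}A_{v_m}^j\mid X_{v_0},\ldots,X_{v_n}]=\prod_{m=0}^{n-1}m^j(X_{v_m})$.

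Second, combining this with the single-particle change of measure $\zeta$, I would write down the candidate density $\tilde\zeta^k(n)$ for $d\Q^k_x/d\P^k_x$ on the $k$-spine filtration: the indicator $\ind_{\{\zeta(\xi^i,n)>0\ \forall i\}}$, times a product over skeleton particles $v\neq\emptyset$ of the one-step motion ratios $\zeta(v,|v|)/\zeta(p(v),|v|-1)$, times the offspring re-weighting factors $A_w^{D_w}/m^{D_w}(X_w)$ over skeleton particles $w$ with $|w|<n$, grouped in whatever way telescopes correctly along each spine. I would set $Z^k(n):=\P^k_x[\tilde\zeta^k(n)\mid\Fg_n]$ (this is the $\Fg_n$-adapted expression obtained by deleting the $A_w^{D_w}$ factors). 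Then I would prove $\tilde\zeta^k(n)$ is a martingale for the $k$-spine filtration by the same induction on $k$ as in the excerpt: $k=1$ is the discrete many-to-one statement, and for $k\geq2$ one conditions on the first split time of the skeleton, after which no particle carries more than $k-1$ marks, so it suffices to treat the stretch before the first split, where a single particle carries all $k$ marks and the $k=1$ argument applies verbatim. Finally I would define $\Q^k_x$ by $d\Q^k_x/d\P^k_x=\tilde\zeta^k(n)$ on the $k$-spine filtration, observe that this restricts to $d\Q^k_x/d\P^k_x|_{\Fg_n}=Z^k(n)$, and check branch-by-branch, exactly as Chauvin--Rouault do for one spine, that $\Q^k_x$ has the behaviour described before the statement (size-biased offspring numbers and $\zeta$-twisted motion for spine particles).

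Third, the Gibbs--Boltzmann weights: by standard properties of conditional expectation, $\Q^k_x(\xi^1_n=u_1,\ldots,\xi^k_n=u_k\mid\Fg_n)=\P^k_x[\tilde\zeta^k(n)\ind_{\{\xi=u\}}\mid\Fg_n]/Z^k(n)$, and the factor $\prod A_w^{D_w}$ inside $\tilde\zeta^k(n)$ cancels against $\P^k_x(\xi=u\mid\Fg_n)=\prod A_w^{-D_w}$, leaving $\frac1{Z^k(n)}$ times the $\zeta$- and $m$-factors along $\skel_{u_1,\ldots,u_k}(n)$. The proof of Lemma~\ref{discrete_many_to_few} is then the same short chain as the proof of Lemma~\ref{many_to_few}: start from the right-hand side, insert the $\Fg_n$-decomposition of $Y$, condition on $\Fg_n$ so that the explicit weight multiplies the Gibbs--Boltzmann weight and the two cancel, collapse the sum over $(u_1,\ldots,u_k)$ to get $\Q^k_x[\frac1{Z^k(n)}\sum_{u_1,\ldots,u_k\in N(n)}Y(u_1,\ldots,u_k)]$ (the indicators $\ind_{\{\zeta(u_i,n)>0\}}$ being free, since spine particles under $\Q^k_x$ always have $\zeta>0$), and finally use $d\Q^k_x/d\P^k_x|_{\Fg_n}=Z^k(n)$ together with $\P_x=\P^k_x|_{\Fg_\infty}$ to reach the left-hand side.

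The main obstacle is the bookkeeping in the second step: pinning down the precise form of $\tilde\zeta^k(n)$ and verifying its factorisation over the skeleton. One must be careful about how many $m^{D_{\cdot}}(X_\cdot)$ and $A_\cdot^{D_\cdot}$ factors a single branching event of the skeleton contributes when several spines split off simultaneously there, and about checking that the one-step ratios $\zeta(v,|v|)/\zeta(p(v),|v|-1)$ telescope along each spine to $\zeta(\xi^i,n)$. Once this combinatorial accounting is nailed down, the martingale property and the final computation are routine; indeed, with no continuous-time Cox integrals to manage, the rest of the argument is if anything lighter than in the continuous case.
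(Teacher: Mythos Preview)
Your proposal is correct and follows exactly the approach the paper itself indicates: the paper does not prove Lemma~\ref{discrete_many_to_few} at all, stating only that ``we shall not prove it, as it is very similar to the continuous-time version studied above,'' and your write-up is precisely a careful discrete transcription of that continuous-time argument (Sections~\ref{full_setup}--\ref{proof_sec}). The bookkeeping caveats you flag are the right ones, and there is no missing idea.
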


\subsection*{Acknowledgements}
The authors are indebted to both Elie A\"id\'ekon and Julien Berestycki for some very helpful discussions, and also thank Pascal Maillard for checking an earlier draft.

\bibliographystyle{plain}
\def\cprime{$'$}

\end{document}